\title{Intrinsic Approximation on Cantor-like Sets, a Problem of Mahler}
\author{Ryan Broderick, Lior Fishman and Asaf Reich}
\date{October 2011}
\newcommand{\R}{{\mathbb{R}}}
\newcommand{\N}{{\mathbb{N}}}
\newtheorem{thm}{Theorem}[section]
\newtheorem{cor}[thm]{Corollary}
\newtheorem{conj}[thm]{Conjecture}
\newtheorem*{conj*}{Conjecture}
\newtheorem{lemma}[thm]{Lemma}
\newtheorem{defn}[thm]{Definition}
\begin{document}
\maketitle

\begin{abstract}
In 1984, Kurt Mahler posed the following fundamental question: How well can
irrationals in the Cantor set be approximated by rationals in the Cantor set?
Towards development of such a theory, we prove a Dirichlet-type theorem for this
intrinsic Diophantine approximation on Cantor-like sets, and discuss related
possible theorems and conjectures. The resulting approximation function is analogous
to that for $\mathbb{R}^d$, but with $d$ being the Hausdorff dimension of the
set, and logarithmic dependence on the denominator instead.

\end{abstract}

\section{Introduction}

The Diophantine approximation theory of the real line is classical, extensive,
and essentially complete as far as characterizing how well real numbers can be
approximated by rationals (\cite{S} is a standard reference). The basic
result on approximability of all reals is
\begin{thm}[Dirichlet's Approximation Theorem] 

For each $x \in \mathbb{R}$, and for any $Q \in \mathbb{N}$, 
there exist $\frac{p}{q}\in \mathbb{Q}$ with $1\leq q \leq Q$, such that
$$\bigl|x-\frac{p}{q}\bigr|<\frac{1}{qQ}.$$
\end{thm}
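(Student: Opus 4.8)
The plan is to prove this by the pigeonhole principle—fittingly, Dirichlet's own box principle—applied to the fractional parts of the integer multiples $0, x, 2x, \ldots, Qx$. Writing $\{t\} = t - \lfloor t \rfloor$ for the fractional part, I would first observe that the $Q+1$ numbers $\{0\cdot x\}, \{1\cdot x\}, \ldots, \{Q\cdot x\}$ all lie in the half-open interval $[0,1)$. The whole argument is essentially constructive once these points are produced, so no analytic machinery is needed; everything reduces to counting.

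Next I would partition $[0,1)$ into the $Q$ disjoint subintervals $\bigl[\tfrac{k}{Q}, \tfrac{k+1}{Q}\bigr)$ for $k = 0, 1, \ldots, Q-1$. Since $Q+1$ points are distributed among only $Q$ intervals, at least two of them, say $\{q_1 x\}$ and $\{q_2 x\}$ with $0 \leq q_1 < q_2 \leq Q$, must land in a common subinterval. Because each such subinterval is half-open of length $\tfrac{1}{Q}$, any two points inside it differ by strictly less than $\tfrac{1}{Q}$, and hence $\bigl|\{q_2 x\} - \{q_1 x\}\bigr| < \tfrac{1}{Q}$.

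Finally I would set $q = q_2 - q_1$ and $p = \lfloor q_2 x \rfloor - \lfloor q_1 x \rfloor$. Expanding the fractional parts gives the clean identity $\{q_2 x\} - \{q_1 x\} = q x - p$, so the inequality above reads $|qx - p| < \tfrac{1}{Q}$; dividing through by $q$ yields $\bigl|x - \tfrac{p}{q}\bigr| < \tfrac{1}{qQ}$, which is exactly the claim.

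Since the result is elementary, the ``hard part'' is really just careful bookkeeping at two points. First, I must confirm that the denominator satisfies $1 \leq q \leq Q$: the two chosen indices are distinct, forcing $q = q_2 - q_1 \geq 1$, while $q_2 \leq Q$ and $q_1 \geq 0$ give $q \leq Q$. Second, I must make sure the inequality comes out \emph{strict}, as stated; this is precisely why the subintervals are taken to be half-open rather than closed, since two points in a closed interval of length $\tfrac{1}{Q}$ could differ by exactly $\tfrac{1}{Q}$. With these two points checked, the proof is complete.
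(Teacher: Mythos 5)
Your proof is correct: it is the standard pigeonhole argument, and the bookkeeping (strictness via half-open intervals, and $1 \leq q \leq Q$) is handled properly. Note that the paper itself states this classical theorem without proof (citing Schmidt), but your argument is precisely the strategy the paper adapts for its main result, Theorem \ref{main} --- pigeonhole applied to fractional parts of multiples of $x$, there with the multiples drawn from the semigroup of maps leaving $C$ invariant and the uniform subintervals of $[0,1)$ replaced by the stage-$n$ intervals of the Cantor construction.
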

\begin{cor} For every $x \in \mathbb{R}$, 
\begin{equation}
\label{dirich}
\bigl|x-\frac{p}{q}\bigr|<\frac{1}{q^2}\text{ for infinitely many }\frac{p}{q}\in\mathbb{Q}.
\end{equation}
\end{cor}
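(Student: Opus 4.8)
The plan is to derive the corollary from Dirichlet's theorem in two steps: first showing that each single instance of the theorem already produces a fraction meeting the weaker bound $1/q^2$, and then promoting this one-per-$Q$ supply of fractions to an infinite family.

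First I would observe that, for a fixed $Q$, the fraction $p/q$ supplied by Dirichlet's theorem satisfies $1 \le q \le Q$, whence $qQ \ge q^2$ and therefore
$$ \Bigl| x - \frac{p}{q} \Bigr| < \frac{1}{qQ} \le \frac{1}{q^2}. $$
Thus every fraction produced by the theorem automatically satisfies \eqref{dirich}; the only remaining issue is to guarantee infinitely many distinct such fractions.

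Next, assuming $x$ irrational, I would argue by contradiction. Suppose \eqref{dirich} had only finitely many solutions $p_1/q_1, \dots, p_n/q_n$. Since $x$ is irrational, each distance $|x - p_i/q_i|$ is strictly positive, so $\delta := \min_i |x - p_i/q_i| > 0$. Choosing $Q \in \mathbb{N}$ with $1/Q < \delta$ and applying Dirichlet's theorem with this $Q$ yields a fraction $p/q$ with $q \le Q$ satisfying both $|x - p/q| < 1/q^2$ and $|x - p/q| < 1/(qQ) \le 1/Q < \delta$. The first inequality makes $p/q$ a solution of \eqref{dirich}, while the second forces it to differ from every $p_i/q_i$, contradicting the finiteness of the list. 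Hence infinitely many solutions exist.

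The main (and essentially only) obstacle is this passage from finitely many to infinitely many fractions: Dirichlet's theorem by itself returns one approximation per $Q$, but nothing prevents the same fraction from being returned for many different values of $Q$, so the irrationality of $x$ must be invoked to force genuinely new approximations as $Q$ grows. I would also flag the degenerate case $x \in \mathbb{Q}$: if $x = a/b$ in lowest terms then any fraction $p/q \neq x$ has $|x - p/q| \ge 1/(bq)$, which is smaller than $1/q^2$ only when $q < b$, so for rational $x$ the statement must be read either as counting fractions rather than reduced rationals (the representations $ka/(kb)$ all equalling $x$) or as restricted to irrational $x$, where all of the content lies.
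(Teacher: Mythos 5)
Your proof is correct and follows the standard derivation that the paper leaves implicit (the corollary is stated there without any proof): each Dirichlet approximant with $1 \le q \le Q$ automatically satisfies $\frac{1}{qQ} \le \frac{1}{q^2}$, and the irrationality of $x$ is what forces genuinely new approximants as $Q \to \infty$, since any finite list of solutions lies at positive distance from $x$. Your closing caveat about rational $x$ is also well taken --- for $x = a/b$ the inequality $\bigl|x - \frac{p}{q}\bigr| \ge \frac{1}{bq}$ shows only finitely many \emph{distinct} rationals can satisfy \eqref{dirich}, so the statement must be read as counting representations $(p,q)$ (or restricted to irrationals, where all the content lies), a point the paper glosses over.
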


That this rate cannot be improved significantly for all reals is evidenced by
the existence of badly-approximable numbers (see definition \ref{BA}).
Furthermore, the rate cannot be improved for most
irrationals, in the sense that the set of very well approximable numbers (see definition \ref{VWA})
is null. In addition, the subject of approximating points on fractals by rationals has been extensively studied in recent
years; see for example \cite{F}, \cite{KW}, and  \cite{KTV} for badly approximable numbers
and \cite{KLW} and \cite{W} for very well approximable numbers.

In 1984, K. Mahler published a paper entitled ``Some suggestions for further research'' \cite{M},
in which he writes: ``At the age of 80 I cannot expect to do much more mathematics. I may however, state a number of questions where perhaps further research might lead to interesting results.'' One of these questions was regarding intrinsic and extrinsic approximation on the Cantor set. In Mahler's words,
``how close can irrational elements of Cantor's set be approximated by rational numbers

\begin{enumerate}
\item	In Cantor's set, and
\item	By rational numbers not in Cantor's set?''

\end{enumerate}

As far as the authors
are aware, there has been very little research on such intrinsic
Diophantine approximation on fractals, though there is such for
algebraic varieties (see for example \cite{GGN}). It is also worth noting that while 
addressing non-intrinsic questions,
the authors of \cite{LSV} used intrinsic techniques.
Studying the intersection of $C$ with numbers
approximable by rationals $\frac{p}{q}$ with $q$ of the specific form $3^n$, they
obtain lower bounds for general, non-intrinsic approximation. They point out
that this type of rational approximation to a point of $C$ will necessarily lie
in $C$. Thus some of their results are \emph{de
facto} also results giving lower bounds on approximations on $C$  by any rationals in $C$.
 However, these results are not likely to be optimal since only Cantor rationals
 with denominators of the form $3^n$ are considered, so
 we wish to consider intrinsic approximation as a topic in and of itself. We present some
initial steps towards a theory determining the analogous optimal rate of
intrinsic approximation for the Cantor set (and those constructed similarly).\\

The paper is organized as follows. In the following section we present our
main result. Section 3 is devoted to formulating our conjecture regarding the
distribution of rationals on the Cantor set, which if true would allow us to conclude that our
main theorem is optimal. The last section is devoted to extrinsic approximation
on the Cantor set, i.e., approximating irrationals on the Cantor by rationals not on the Cantor set . 

\section{Dirichlet type theorem for Cantor sets}

\noindent Let $C$ denote the Cantor-like set consisting of numbers in $I=[0,1]$
which can be written in base $b>2$ using only the digits in $S\subset
\{0,1,\dots,b-1\}$, where $\bigl|S\bigr|=a>1$.
Obviously this is equivalent to partitioning $I$ into $b$ equal subintervals,
only keeping those indexed by S, and successively continuing this on each
remaining subinterval. (The usual middle-thirds set is given by $b=3$,
$S=\{0,2\}$.) We will assume that $S$ is a proper subset of $\{0,1,\dots,b-1\}$,
i.e. that $C\neq I$, as the case $C=I$ is simply the classical one.

Throughout the paper we denote by $\{x\}$ the fractional part of $x$ and by
$\lfloor x \rfloor$ the integer part. To state the theorem we also associate to
the set the number $b_0$, where $b_0$ is the least integer greater than $1$ such that $C$ is
invariant under $x \mapsto \{b_0 x\}$. As we shall see, this is simply equal to
$b$ except when the given definition of $C$ is redundant in a certain sense,
in which case we still have $b = b_0^r$ for some $r\in\N$.
\begin{thm}[Dirichlet for Cantor sets] \label{main}
Let $d = \dim C$.
For every $x\in C$ and every $Q$ of the form $b^n$, there exists $p/q \in
\mathbb{Q} \cap C$ with $q \leq b_0^{Q^d}$ such that
$$\bigl|x-\frac{p}{q}\bigr|<\frac{1}{qQ}.$$
\end{thm}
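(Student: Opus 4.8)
The plan is to produce the approximating rational by a \emph{folding} construction: truncate the base expansion of $x$ and force it to become periodic, choosing the period by a pigeonhole argument on cylinders. The first step is to pass to the coarsest base in which $C$ is self-similar. Writing $d = \log a/\log b$, note the clean identity $Q^{d} = b^{nd} = (b^{d})^{n} = a^{n}$, since $b^{d}=a$; thus the target denominator bound $b_0^{Q^d}$ equals $b_0^{a^n}$. I would first record the basic properties of $b_0$: that it exists, that $b = b_0^{r}$ for some $r\in\N$, and that $C$ is exactly the set of numbers whose base-$b_0$ expansion uses digits from some $S_0 \subset \{0,\dots,b_0-1\}$ with $|S_0| = a_0 = a^{1/r}$ and $b_0^{d}=a_0$. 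All subsequent work is carried out in base $b_0$; for readability one may first present the generic case $b_0=b$ and then indicate the identical argument in base $b_0$.

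For the construction, write $x = 0.d_1 d_2 d_3\cdots$ in base $b_0$ with every $d_k \in S_0$, and consider the tails $x_k = \{b_0^{k} x\} = 0.d_{k+1}d_{k+2}\cdots$, each of which lies in $C$ by shift-invariance. The first $rn$ base-$b_0$ digits of a tail determine which of the $a_0^{rn} = a^{n} = Q^{d}$ level-$(rn)$ cylinders it occupies. Applying the pigeonhole principle to the $Q^{d}+1$ tails $x_0, x_1, \dots, x_{Q^{d}}$, I obtain indices $0 \le i < j \le Q^{d}$ such that $x_i$ and $x_j$ agree in their first $rn$ digits. I then set
$$y = 0.d_1\cdots d_i\,\overline{d_{i+1}\cdots d_j},$$
the number obtained by repeating the block $d_{i+1}\cdots d_j$ periodically. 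Its digits all lie in $S_0$, so $y \in \mathbb{Q}\cap C$, and since $y = \frac{N}{b_0^{i}(b_0^{\,j-i}-1)}$ for an integer $N$, its reduced denominator satisfies $q \le b_0^{i}(b_0^{\,j-i}-1) < b_0^{j} \le b_0^{Q^{d}}$, giving the required bound on $q$.

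It remains to estimate $\bigl|x-y\bigr|$. The key claim is that $x$ and $y$ agree in their first $j+rn$ base-$b_0$ digits: for positions up to $j$ this is immediate, and for positions $j+1,\dots,j+rn$ it follows from the agreement of $x_i$ and $x_j$ on their first $rn$ digits, chaining the periodicity relation $d_{i+t}=d_{j+t}$ across the period when $j-i < rn$. Consequently $\bigl|x-y\bigr| \le b_0^{-(j+rn)}$, while $q < b_0^{j}$ and $Q = b_0^{rn}$ give $\frac{1}{qQ} > b_0^{-(j+rn)}$, whence $\bigl|x-y\bigr| < \frac{1}{qQ}$. I expect the main obstacle to be not this estimate but the clean formulation of the $b_0$-reduction: verifying that the minimal shift-invariant base exists, that it divides $b$ as $b=b_0^{r}$, and that the associated digit set $S_0$ faithfully describes $C$ with $b_0^{d}=a_0$. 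Once that structural fact is in place, the folding-plus-pigeonhole argument delivers the theorem, with the sharpened denominator $b_0^{Q^d}$ (rather than $b^{Q^d}$) coming precisely from running the construction in the coarsest base.
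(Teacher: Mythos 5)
Your pigeonhole-plus-folding construction is sound as far as it goes, and it is in fact the paper's own argument in different clothing: with matching indices $i<j$, the paper's approximant $\frac{\lfloor b_0^{j}x\rfloor-\lfloor b_0^{i}x\rfloor}{b_0^{j}-b_0^{i}}$ is exactly your folded number $y=0.d_1\cdots d_i\,\overline{d_{i+1}\cdots d_j}$ (this identification is the content of the paper's Lemma 2.3), and your digit-agreement estimate amounts to dividing $|\{b_0^{j}x\}-\{b_0^{i}x\}|<1/Q$ through by $b_0^{j}-b_0^{i}$, which is how the paper argues. Your chaining of the periodicity relation in the case $j-i<rn$ is also correct, and using $Q^d+1$ tails instead of the paper's ``orbit points plus the point $0$'' is a harmless variant.

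The genuine gap is precisely the step you set aside as a structural formality: the ``$b_0$-reduction.'' The assertions that $b=b_0^{r}$ for some $r\in\N$ and that $C$ is faithfully described in base $b_0$ by a digit set $S_0$ with $|S_0|=a_0=b_0^{d}$ are not routine bookkeeping; they are the one point in the whole proof where a deep theorem is needed. The paper invokes Furstenberg's result \cite{Fu} that an infinite closed subset of $\mathbb{R}/\mathbb{Z}$ invariant under a multiplicative semigroup with two multiplicatively independent generators must be all of $\mathbb{R}/\mathbb{Z}$: applying this to the semigroup $M$ of integer multiplication maps mod $1$ preserving $C$, and using $C\neq I$, one concludes $M=\{b_0^{k}\}$, whence $b=b_0^{r}$ and the base-$b_0$ digit description of $C$. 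Without some such input, nothing rules out a priori that the least invariant integer $b_0$ is multiplicatively independent of $b$, or satisfies only a relation $b_0^{s}=b^{t}$ without $b$ being a power of $b_0$; in either case there would be no digit set $S_0$ in base $b_0$ describing $C$, and your cylinder count and the membership $y\in C$ would both fail. As written, your argument unconditionally proves only the weaker statement with $b^{Q^{d}}$ in place of $b_0^{Q^{d}}$ (i.e., the case $b_0=b$, where no structural fact is needed); closing the gap requires the Furstenberg argument or a substitute for it, and that missing idea is the heart of the paper's proof.
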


It follows from the statement that for \emph{any} $Q \in
\mathbb{N}$, the same statement holds with the bound multiplied by b (by letting
$b^n \leq Q < b^{n+1}$). As an immediate corollary to Theorem \ref{main} we get:

\begin{cor}
\label{maincor}
For all $x\in C,$ there exist infinitely many solutions $p\in\mathbb{N}$,
$q\in\mathbb{N}$, $\frac{p}{q}\in C$ to \[
\bigl|x-\frac{p}{q}\bigr|<\frac{1}{q(\log _{b_0} q)^{1/d}}.\]
\end{cor}

Also notice that the approximation function's asymptotic behavior gets
better as $d \to 0$, even though the first such $q$ whose existence we prove can
tend to infinity as $b$ does.

Before we begin the proof, we first need a characterization of the rationals in
$C$:
\begin{lemma}

A rational number is in $C$ if and only if it can be written either as a
terminating base-$b$ expansion 
(left end points in the construction) or as
\begin{equation}
\label{Cantor1}
\frac{\bigl(\sum_{i=0}^{k+l-1}c_{i}b^{k+l-1-i}-\sum_{j=0}^{k-1}c_{j}b^{k-1-j}
\bigr)}{b^{k+l}-b^{k}}.
\end{equation}

\noindent where $l,k\in\mathbb{N}$, and  $c_{i}\in S $.

\noindent Equivalently (\ref{Cantor1}) can also be expressed  in terms of
base-$b$ expansions, i.e.,

\begin{equation}
\label{Cantor2}
\frac{
\bigl((c_{0}c_{1}...c_{k+l-1})_{b}-(c_{0}c_{1}...c_{k-1})_{b}\bigr)}{b^{k+l}-b^{
k}}.
\end{equation}

\end{lemma}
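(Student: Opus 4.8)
The plan is to reduce the statement to the classical fact that a real number is rational if and only if its base-$b$ expansion is eventually periodic, and then to pass between such a periodic expansion and the closed form (\ref{Cantor1}) by summing a geometric series. Since by definition a point of $C$ is precisely a number admitting a base-$b$ expansion all of whose digits lie in $S$, the two directions of the equivalence amount to translating back and forth between such an expansion and the quotient of integers displayed in the statement.

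For the forward direction, I would begin with $x \in C \cap \mathbb{Q}$ and fix a base-$b$ expansion $x = 0.c_0 c_1 c_2 \cdots$ with every $c_i \in S$, which exists because $x \in C$. As $x$ is rational, this expansion is eventually periodic, so there are integers $k \geq 0$ and $l \geq 1$ with $c_{i+l} = c_i$ for all $i \geq k$; thus $x = 0.c_0 \cdots c_{k-1}\overline{c_k \cdots c_{k+l-1}}$. If this expansion terminates, i.e.\ $c_i = 0$ for all large $i$ (necessarily $0 \in S$), then $x$ is a left endpoint of the construction and is covered by the terminating case. Otherwise, writing $A = \sum_{j=0}^{k-1} c_j b^{k-1-j}$ for the preperiod and $B = \sum_{j=0}^{l-1} c_{k+j} b^{l-1-j}$ for the period, summing the geometric series gives
$$x = \frac{A}{b^k} + \frac{1}{b^k}\cdot\frac{B}{b^l - 1} = \frac{A(b^l - 1) + B}{b^k(b^l - 1)} = \frac{A b^l + B - A}{b^{k+l} - b^k}.$$
The key identity is then that $A b^l + B = \sum_{i=0}^{k+l-1} c_i b^{k+l-1-i}$ while $A = \sum_{j=0}^{k-1} c_j b^{k-1-j}$, so the numerator is exactly that of (\ref{Cantor1}), establishing the claimed form; the base-$b$ reformulation (\ref{Cantor2}) is then immediate from the definitions of $A$ and $B$.

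The converse runs the same computation in reverse: given a number of the form (\ref{Cantor1}) with all $c_i \in S$, or a terminating base-$b$ expansion with digits in $S$, I would reconstruct the eventually periodic expansion $0.c_0 \cdots c_{k-1}\overline{c_k \cdots c_{k+l-1}}$, which uses only digits from $S$ and hence represents a point of $C$, while the number is visibly rational. I expect the main obstacle to be purely bookkeeping rather than conceptual: the non-uniqueness of base-$b$ expansions forces one to check that it is the \emph{$S$-digit} expansion (rather than a competing expansion ending in repeated $b-1$) that is shown to be eventually periodic, and one must isolate the terminating/left-endpoint case, which is subsumed into (\ref{Cantor1}) when $0 \in S$ but must be listed separately otherwise, so that the indices $k$ and $l$ align precisely with the sums appearing in the statement.
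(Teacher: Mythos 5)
Your proposal is correct and follows essentially the same route as the paper's proof: express the rational point of $C$ via its eventually periodic $S$-digit expansion, split it into preperiod and period, sum the geometric series, and rearrange into the form (\ref{Cantor1}). If anything, you are slightly more careful than the paper, which leaves the converse direction and the expansion-uniqueness bookkeeping implicit.
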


\begin{proof}
A rational in $C$ has either a terminating $b$-ary expansion (consisting of
digits from $S$) or an eventually periodic one. If it is purely periodic of
period $l$, it has the form \[
\sum_{n=1}^{n=\infty}{\frac{b^{l-1}x_0+b^{l-2}x_1+...+x_{l-1}}{(b^l)^n}} =
\frac{b^{l-1}x_0+b^{l-2}x_1+...+x_{l-1}}{b^l-1} , \] where the digits $x_i$ are
in $S$. If the rational is not purely periodic then one must insert some number
$k$ of initial zeros and then add the initial terminating expansion of length
$k$, so we obtain the form \[
 \frac{b^{l-1}x_0+b^{l-2}x_1+...+x_{l-1}}{(b^l-1)b^k} +
\frac{(b^l-1)(b^{k-1}y_0+b^{k-2}y_1+...+y_{k-1})}{(b^l-1)b^k} \] for some digits
$y_i \in S$. Rearranging this gives the result.
\end{proof}

\begin{proof} [Proof of theorem \ref{main}]
Let $x \in C \setminus \mathbb{Q}$. Given any $n \in \mathbb{N}$, let
$Q=b^{n}$. Denote by $M=(m_i)$ the semigroup of positive integer multiplication
maps mod 1 leaving $C$ invariant. We order the elements in increasing order from
$1=m_0$.

\noindent
There are $a^{n}$ possibilities for the first $n$ digits in the
$b$-ary expansion of ${\{qx\}}$. Consider the elements of $C$ given by $0$,
$\{m_0 x\}$,...$\{m_{a^n-1}x\}$. By the pigeonhole principle either there exist
$0 \leq q,q' <a^n$ such that $\{m_q x\}$ and $\{m_{q'}x\}$ have the same first
$n$ digits, or the same holds for some $0 \leq q <a^n$ and $0$. That is, they
are in the same interval of the ${n}$-th stage of $C$'s
construction. Assuming the former, it follows that $|\{m_{q'}x\}-\{m_q
x\}|<\frac{1}{Q}$. Rewriting gives
\[
\bigl|m_{q'} x-\left\lfloor m_{q'} x\right\rfloor -m_q x+\left\lfloor m_q
x\right\rfloor \bigr|<\frac{1}{Q}.\]

Setting $p=\left\lfloor m_{q'}x\right\rfloor -\left\lfloor m_q x\right\rfloor $
we get
\[
\bigl|x-\frac{p}{m_q'-m_q}\bigr|<\frac{1}{(m_q'-m_q)Q}.\]

If one of the above values is 0 rather than an $m_i$ the calculation is trivial.
Suppose $M$ were generated by more than one element. Then by H.\ Furstenberg's seminal result in \cite{Fu},
stating
that the only infinite closed subset of $\mathbb{R}/\mathbb{Z}$ invariant under
$M$ is $\mathbb{R}/\mathbb{Z}$ itself, we would have to have $C=I$. But we assumed $C\neq I$, so that cannot be true.
(Notice however that we recover Dirichlet's original theorem in this case, since
then $M = \N$, so $m_{q'}-m_q < a^n=Q$.)
So $M$ has a single generator, which we will denote $b_0$, and then $b=b_0^r$ for some $r$,
and $a=a_0^r$. $C$ is also the set constructed in the corresponding
way for $b_0$ and some $S_0$ (namely, the unique subset of $\{0,1,\dots,b_0-1\}$
such that $S$ is the set $S_0^r$ of $r$-fold concatenations of elements of $S_0$). 
Then $m_{a^n-1} = b_0^{a^n-1}$.
Since $m_{q'}-m_q=b_0^{k}(b_0^{d}-1)$ for some integers $k,d$, following
(\ref{Cantor2}) it suffices to observe that $p=\left\lfloor
b_0^{k+d}x\right\rfloor -\left\lfloor b_0^k x\right\rfloor$. Thus 
$\frac{p}{m_{q'}-m_q} \in C$, and $m_{q'}-m_q < b_0^{a^{n}}=b_0^{Q^d}$.
\end{proof}

It is worth noticing a similarity in form between Corollary \ref{maincor} and the more general form of Dirichlet's theorem, stating that points in $\R^d$ are approximable according to the function $\frac{1}{qq^{1/d}}$. However there is a ``gap'' between the approximation functions in these two theorems -- the $q^{1/d}$ from the classic case is replaced by a $(\log_{b_0} q)^{1/d}$ in the fractal case, so that the intrinsic approximation function on $C$ does not tend to $q^{-2}$ even as $\dim C$ tends to $1$.
The reason for this dichotomy is evident from the proof, which hinges on the growth of $m_k$ in terms of $k$:
When $M$ has two generators, $m_k = k$, corresponding to the classical case, and when $M$ has a single generator, $m_k = b_0^k$,
corresponding to the fractal case. These two possibilities for the asymptotics of $m_k$ -- either exponential or linear -- lead to the corresponding differences in the asymptotics of the approximation functions.

\section{Further investigation, conjectures and consequences}

The obvious next step would be to check whether this is the optimal rate of
approximation for $C$. We first recall the following definitions:
\begin{defn}
\label{BA}
The set of badly approximable numbers ({\bf{BA}}) consists of all reals $x$ such that for some $c(x)>0$ \[
\bigl| \frac{p}{q} - x \bigr| > \frac{c(x)}{q^2} \hspace{2mm} \text{ for all } p/q \in
\mathbb{Q}. \] 
\end{defn}
We thus define, relative to the approximation function proven above,
the {\sl{intrinsic}} set of badly approximable numbers, denoted by ${\bf{BA}}_{C}$, as those in $C$ satisfying \[
\bigl| \frac{p}{q} - x \bigr| > \frac{c(x)}{q(\log _{b_0} q)^{1/d}} \hspace{2mm}
\text{ for all } p/q \in \mathbb{Q}\cap C. \]

\begin{defn}
\label{VWA}
The set  of very well approximable numbers ({\bf{VWA}}) is
the set of all reals $x$ satisfying for some positive $\epsilon(x)$
$$
\bigl| \frac{p}{q} - x \bigr| < \frac{1}{q^{2+\epsilon(x)}}
$$
for infinitely
many rationals $p/q$.
\end{defn}
We thus define the \emph{intrinsic} {\bf{VWA}} numbers, denoted ${\bf{VWA}}_c$, as those in
$C$ satisfying
$$
\bigl| \frac{p}{q} - x \bigr| < \frac{1}{q^{1+\epsilon(x)}(\log _{b_0} q)^{1/d}}\text{ for infinitely many }
	p/q\in C,
$$ 
or equivalently, since $\epsilon(x) > 0$ is arbitrary and can depend on $x$, as those
in $C$ satisfying
\begin{equation}
\label{intrinsic}
\bigl| \frac{p}{q} - x \bigr| < \frac{1}{q^{1+\epsilon(x)}}\text{ for infinitely many }
	p/q\in \mathbb{Q}\cap C.
\end{equation}
One could definitively say that our approximation function is optimal if one proved that intrinsic
${\bf{BA}}_C$ is nonempty,
and one could conclude that the power of $q$ in the denominator is correct
if ${\bf{VWA}}_C$ has zero
$d$-dimensional Hausdorff measure.

These results appear difficult to achieve, however, without
having a deeper understanding of the rationals in $C$. For the real line, there
exists a large arsenal of useful information regarding distribution properties
of rationals, their quantity within bounds on $q$, etc., whereas for $C$ nothing
is even known about which denominators can appear (in reduced form, of course;
the expression (\ref{Cantor2}) is not useful here since it is not reduced). In
fact Mahler points out basically the same fundamental difficulty. To obtain the
desired results, we believe a major new piece of information will be necessary, such as
a sharp bound either on how many rationals appear in $C$ with (reduced) 
denominators in a given range
or on how these rationals ``repel" each other as a function of $q$.
We nonetheless make the following conjecture for the standard Cantor
set supported by computer calculations (see table 1 below).

\begin{conj}
\label{cantorconj}
Let $N(s,t) = \|\{\frac{p}{q} \in C, gcd(p,q)=1, s \leq q \leq t\}\|$. Then
$N(3^n,3^{n+1}) \in O(2^{(1+\epsilon)n})$ for all $\epsilon>0.$
\end{conj}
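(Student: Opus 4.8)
The plan is to bound the \emph{cumulative} count $N(1, 3^{n+1})$ rather than the dyadic count $N(3^n, 3^{n+1})$; since the conjectured bound grows geometrically in $n$, a bound $N(1, 3^{n+1}) = O(2^{(1+\epsilon)n})$ immediately yields the claim. First I would separate each reduced denominator as $q = 3^k q'$ with $\gcd(q',3)=1$, where $k = v_3(q)$ is exactly the length of the pre-period of the base-$3$ expansion. Prepending an arbitrary word in $\{0,2\}^k$ to a purely periodic Cantor rational of $3$-free denominator $q'$ shows $N(1, 3^{n+1}) \le \sum_{k=0}^{n+1} 2^k M(3^{n+1-k})$, where $M(Q')$ denotes the number of reduced Cantor rationals whose denominator is coprime to $3$ and at most $Q'$. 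The whole problem then reduces to proving $M(Q') = O(Q'^{(1+\epsilon)d})$: writing $(Q')^d = 2^{\log_3 Q'}$ and using that $3^d = 2$ exactly, the factor $2^k$ from the pre-period is precisely cancelled by the main term $3^{-kd}$ of $M(3^{n+1-k})$, while the surplus $3^{-k\epsilon d} = 2^{-k\epsilon}$ renders the $k$-sum convergent.

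So the crux is $M(Q')$, a count of purely periodic Cantor rationals, and here I would pass to the dynamics of $T : x \mapsto \{3x\}$, under which $C$ is invariant. The points of $C$ fixed by $T^l$ are exactly the $2^l$ numbers $N/(3^l-1)$ with $N = (c_0 c_1 \dots c_{l-1})_3$ and each $c_i \in \{0,2\}$; a rational of $3$-free reduced denominator $q'$ has period exactly $l = \mathrm{ord}_{q'}(3)$ and arises from such an $N$ with $(3^l-1)/\gcd(N, 3^l-1) = q'$. Thus $M(Q')$ is governed by $\#\{N \in \{0,2\}^l : \gcd(N, 3^l-1) \ge (3^l-1)/Q'\}$, summed over $l$. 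For the short periods with $3^l - 1 \le Q'$ every one of the $2^l$ points is admissible, and these alone contribute $\sum_{3^l \le Q'} 2^l \asymp Q'^d$, which is already the conjectured main term. The remaining task is to show the long periods contribute no more.

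The heart of the matter --- and the step I expect to be the main obstacle --- is therefore an equidistribution estimate for digit-restricted numerators modulo divisors of $3^l - 1$. Concretely, I would want a bound of the shape
\[
\#\{N \in \{0,2\}^l : g \mid N\} \ll \frac{2^l}{g}\,(3^l-1)^{\epsilon}
\]
uniformly over all divisors $g$ of $3^l - 1$. Granting this, the divisors $g \ge (3^l-1)/Q'$ contribute $\ll Q'(2/3)^{l}(3^l)^{\epsilon}$ per period $l$, which is geometric in $l$ (the ratio $2\cdot 3^{\epsilon-1}$ being less than $1$ for small $\epsilon$); summing over $l \ge \log_3 Q'$ again produces $O(Q'^{d+\epsilon})$, matching the main term and closing the argument. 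Proving such a uniform bound, however, is exactly the difficulty the paper flags: it requires controlling the joint distribution of base-$3$ digit-restricted integers across \emph{all} divisor classes of $3^l - 1$, a number whose factorization we cannot control, and for which the available exponential-sum estimates over digit-restricted sets (of Erd\H{o}s--Mauduit--S\'ark\"ozy type) do not obviously deliver the needed uniformity. This is the ``major new piece of information'' alluded to above, and without it I do not see how to rule out a conspiracy in which an atypically large share of the $2^l$ Cantor numerators share a large common factor with $3^l - 1$.
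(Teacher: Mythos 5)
The statement you were asked to prove is not proven in the paper at all: it is stated as a conjecture, supported only by the numerical evidence in Table 1, and the authors explicitly say that establishing results of this kind would require ``a major new piece of information'' about the rationals in $C$ --- a forthcoming paper of Solomon and Weiss is cited only for \emph{partial} progress. So there is no paper proof to compare against, and the honest question is whether your argument closes the conjecture on its own. It does not, though your reductions are correct and worthwhile: bounding the cumulative count does suffice; the factorization $q = 3^k q'$ with $k = v_3(q)$ equal to the pre-period length gives a valid injection behind $N(1,3^{n+1}) \le \sum_k 2^k M(3^{n+1-k})$; the purely periodic Cantor rationals of period $l$ are exactly the $2^l$ points $N/(3^l-1)$ with $N$ a $\{0,2\}$-digit string; the short periods $3^l-1 \le Q'$ give the main term $\asymp Q'^d$; and the arithmetic by which your claimed uniform bound would control the long-period tail (divisor bound plus the geometric ratio $2\cdot 3^{\epsilon-1}<1$) is also right.

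But the step you flag as the ``main obstacle'' is not a technical lemma you can defer: it \emph{is} the conjecture. The bound $\#\{N \in \{0,2\}^l : g \mid N\} \ll (2^l/g)(3^l-1)^{\epsilon}$, uniformly over divisors $g$ of $3^l-1$ of size up to nearly $3^l$, is --- after your own reductions --- essentially equivalent to the statement being proven; a ``conspiracy'' in which an atypically large share of $\{0,2\}$-strings share a large common factor with $3^l-1$ is exactly what a failure of the conjectured bound $N(3^n,3^{n+1}) \in O(2^{(1+\epsilon)n})$ would look like. Known equidistribution results for digit-restricted integers (Erd\H{o}s--Mauduit--S\'ark\"ozy, and later exponential-sum refinements) handle moduli that are far smaller --- polynomial in $l$, or at best a small fixed power of $3^l$ --- nowhere near the uniformity over all divisors up to $3^l/Q'$ that you need. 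So what you have is a correct and rather clean \emph{reformulation} of the conjecture in terms of divisibility statistics of digit-restricted integers, which may well be a useful framing, but it contains no proof of the critical estimate, as you yourself acknowledge. As a proof attempt it has a genuine, central gap --- the same gap the paper's authors identify when they leave the statement as a conjecture supported only by computation.
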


Notice that defining 
$\phi (n)=\dfrac{1}{2}
\dfrac{N(3^n,3^{n+1})}{N(3^{n-1},3^n)}$,
the conjecture is equivalent to the convergence of $\phi (n)$ to 1. The table below
sums up our computer based results:\\

\begin{table}[h]
\caption{} 
\centering
\begin{tabular}{|c|| c| c| c| c| c| c| c| c| c| c| c|}
\hline
n&4&5&6&7&8&9&10&11 \\ 
\hline
$\phi (n)$& 1.808& 1.064& 1.32& 1.18& 1.258 & 1.057& 1.176& 1.063\\

\hline

\end{tabular}
\label{tab:hresult}
\end{table}

The conjecture is significant, implying that the power of $q$ in our theorem cannot be improved.

\begin{cor} [\bf{of conjecture \ref{cantorconj}}]
$\mu(\text{{\bf{VWA}}}_{C})=0$, where $\mu$ is the $d$-dimensional Hausdorff measure
restricted to $C$.
\end{cor}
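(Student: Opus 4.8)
The plan is to use a Borel–Cantelli type argument. We want to show that the set of $x \in C$ satisfying the very-well-approximable condition (\ref{intrinsic}) — namely $|p/q - x| < q^{-(1+\epsilon)}$ for infinitely many $p/q \in \mathbb{Q} \cap C$ — has zero $d$-dimensional Hausdorff measure. The natural approach is to cover the VWA set by the $\limsup$ of the intervals centered at each rational $p/q \in C$ of radius $q^{-(1+\epsilon)}$, and then estimate the total Hausdorff measure of these covers, using the conjectural bound on the counting function $N(s,t)$ to control how many such intervals appear at each scale.

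Let me set this up carefully. Fix $\epsilon > 0$. For each reduced rational $p/q \in C$, consider the interval $B(p/q, q^{-(1+\epsilon)})$. A point $x \in \mathbf{VWA}_C$ (for this particular $\epsilon$) lies in infinitely many such intervals, so $\mathbf{VWA}_C \cap C \subseteq \limsup$ of these intervals. I would group the rationals dyadically by denominator: let $A_n$ be the union of intervals $B(p/q, q^{-(1+\epsilon)})$ over reduced $p/q \in C$ with $3^n \le q < 3^{n+1}$. By Conjecture \ref{cantorconj}, the number of such rationals is $O(2^{(1+\delta)n})$ for any $\delta > 0$. Each interval has radius comparable to $3^{-n(1+\epsilon)}$, hence $d$-dimensional "mass" (diameter to the power $d$) of order $3^{-nd(1+\epsilon)}$. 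Since $d = \log_b a = \log_3 2$ for the standard Cantor set, we have $3^{-nd} = 2^{-n}$, so the total contribution of $A_n$ to the $d$-dimensional Hausdorff premeasure is

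\[
\sum_{3^n \le q < 3^{n+1}} \bigl(q^{-(1+\epsilon)}\bigr)^d \ \lesssim \ 2^{(1+\delta)n} \cdot 2^{-n(1+\epsilon)} \ = \ 2^{(\delta - \epsilon)n}.
\]

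Choosing $\delta < \epsilon$, this is a convergent geometric series in $n$. By the convergence Borel–Cantelli lemma for Hausdorff measure (the tail $\sum_{m \ge n}$ tends to $0$ as $n \to \infty$, and the $\limsup$ set is covered by $\bigcup_{m \ge n} A_m$ for every $n$), the $\limsup$ set has $d$-dimensional Hausdorff measure zero. Taking a countable union over $\epsilon = 1/k$, $k \in \mathbb{N}$, gives $\mu(\mathbf{VWA}_C) = 0$.

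\emph{The main obstacle} is matching the exponents cleanly, and this is exactly where the conjecture's precise form is essential: the counting bound $2^{(1+\epsilon)n}$ is tuned so that the product with the covering mass $2^{-n(1+\epsilon)}$ barely converges once the genuine VWA exponent $\epsilon$ is switched on. A subtler technical point is verifying that the intervals $B(p/q, q^{-(1+\epsilon)})$, intersected with $C$, really can be used as admissible sets in a Hausdorff-measure cover at the right scale — one must either cover each such interval by boundedly many level-$m$ intervals of the Cantor construction (whose sidelength is $3^{-m} \approx q^{-(1+\epsilon)}$, so $m \approx n(1+\epsilon)$), or invoke the standard fact that for computing Hausdorff measure one may use arbitrary covering sets and charge each by its diameter to the $d$. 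I would take the latter route to keep the estimate transparent, being careful that the reduced denominator $q$ (not the unreduced form in (\ref{Cantor2})) is what the conjecture counts. Apart from this bookkeeping, the argument is the standard convergence half of the Hausdorff-measure Borel–Cantelli framework, so no deep new input beyond the conjecture is required.
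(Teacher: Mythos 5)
Your proposal is correct and follows essentially the same route as the paper's proof: both cover $\mathbf{VWA}_C$ by the limsup of the intervals $B(p/q, q^{-(1+\epsilon)})$ grouped dyadically by $3^n \leq q < 3^{n+1}$, invoke Conjecture \ref{cantorconj} to count the intervals at each level, arrive at the same convergent series $\sum_n 2^{(\delta-\epsilon)n}$ with $\delta < \epsilon$, and finish by taking a countable union over $\epsilon = 1/k$. The only (immaterial) difference is in the concluding step: the paper applies Borel--Cantelli to the measure $\mu$ itself, bounding $\mu(S_n)$ via the power law $\mu(B(x,r)) = O(r^d)$, whereas you estimate the $d$-dimensional Hausdorff premeasure of the tail covers $\bigcup_{m \geq n} A_m$ directly.
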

\begin{proof}
Let $V_{\epsilon}$ denote the subset of $\text{{\bf{VWA}}}_{C}$ satisfying (\ref{intrinsic})
for a particular $\epsilon(x)=\epsilon$, and observe that $V_{\epsilon}$ is the
lim sup of the sets \begin{equation} S_n=
\bigcup_{p/q \in C, 3^n \leq q < 3^{n+1}}
\left(\frac{p}{q}-\frac{1}{q^{1+\epsilon}},
\frac{p}{q}+\frac{1}{q^{1+\epsilon}}\right) .\end{equation}
By the Borel-Cantelli lemma, $\mu(V_{\epsilon})=0$ if $\sum_{n=0}^{\infty}
\mu(S_n)<\infty$.
The number of intervals in the union $S_n$ is by assumption in
$O(2^{(1+\epsilon')n})$ for all $\epsilon'>0$. The radius of each interval is
$O(\frac{1}{3^{(1+\epsilon)n}})$, and $\mu$ satisfies a $d$-power law, so we
have that $\mu(B(x,r))$ is $O(r^d)$ for $x \in C$ and small $r$. So the measure
of each interval is $O(\frac{1}{2^{(1+\epsilon)n}})$, and \begin{equation}
\mu(S_n) \in O\left(\frac{2^{(1+\epsilon')n}}{2^{(1+\epsilon)n}}\right) 
= O(2^{(\epsilon'-\epsilon)n}) \hspace{2mm} \text{ for all } \epsilon'>0. \end{equation}  
Thus we take $\epsilon' <\epsilon$, and obtain that $\sum_{n=0}^{\infty}
\mu(S_n)<\infty$ as desired. Finally observe $\text{{\bf{VWA}}}_{C}=\cup_{m=1}^{\infty}
V_{1/m}$, so $\mu(\text{{\bf{VWA}}}_{C})=0$.\\
\end{proof}

At this point the reader might be thinking that a stronger bound on $N$ would
allow one to prove that theorem \ref{main} is even sharper: specifically,
if $N(3^n,3^{n+1}) \in O(2^n)$, then a proof of similar type
would show that the superset
$$\big\{x : \bigl| \frac{p}{q} - x \bigr| < \frac{1}{q(\log _{b_0} q)^{1/d+\epsilon(x)}}\text{ for infinitely many } p/q\in C\big\} \supset {\bf{VWA}}_C$$
is $\mu$-null.
However this avenue is not possible, as a forthcoming followup paper by Noam
Solomon and Barak Weiss discussing Conjecture \ref{cantorconj} and partial
progress towards it will prove that this stronger bound on $N$ does not hold.

\section{Extrinsic approximation}
As mentioned in the introduction, Mahler also raised the complementary \emph{extrinsic} question: how well
can a rational in $C$ be approximated by rationals \emph{outside} of $C$? To
this end, consider a weaker variant of Conjecture \ref{cantorconj}:
\begin{conj}
 \label{weak}
$N(3^n,3^{n+1}) \in O(2^{\gamma n})$ for some $\gamma<2$.
\end{conj}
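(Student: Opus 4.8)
The plan is to exploit the conjugacy between the dynamics $x\mapsto\{3x\}$ on $C$ and the full one-sided shift on two symbols: via the base-$3$ digit map, a reduced rational $p/q\in C$ corresponds to an eventually periodic string over $\{0,2\}$, and by the Lemma its reduced denominator is $q=3^a q'$ with $\gcd(q',3)=1$, $q'\mid 3^l-1$, where $l=\mathrm{ord}_{q'}(3)$ is the eventual period. Since the pre-period and the power $3^a$ contribute only a bounded multiplicative factor to the count in any window of the form $[3^n,3^{n+1})$, it suffices to bound, for each period length $l$, the number $R(l,n)$ of purely periodic strings of minimal period $l$ whose induced reduced denominator $q'$ lies in $[3^n,3^{n+1})$, and then to prove $\sum_l R(l,n)\in O(2^{\gamma n})$ for some $\gamma<2$.

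First I would dispose of the short periods. For $l\le(1+\delta)n$ there are at most $2^l$ periodic strings in total, so $\sum_{l\le(1+\delta)n}R(l,n)\le\sum_{l\le(1+\delta)n}2^l=O(2^{(1+\delta)n})$, which already lies inside the budget $\gamma<2$ provided $\delta<1$. The entire difficulty is therefore concentrated in the long periods $l>(1+\delta)n$, where the trivial bound $R(l,n)\le 2^l$ is enormous and must be replaced by something that decays in $l$.

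For the long periods I would count numerators directly. Fix $q'\in[3^n,3^{n+1})$ with $\mathrm{ord}_{q'}(3)=l$ and put $M=(3^l-1)/q'$; a length-$l$ string $(c_0,\dots,c_{l-1})\in\{0,2\}^l$ produces the fraction $N/(3^l-1)$ with $N=\sum_i c_i 3^{l-1-i}$, and this reduces to denominator exactly $q'$ precisely when $M\mid N$. Writing $e(\theta)=e^{2\pi i\theta}$ and detecting the divisibility by additive characters modulo $M$ gives
\[
\#\{(c_i)\in\{0,2\}^l : M\mid N\}=\frac{1}{M}\sum_{t=0}^{M-1}\ \prod_{i=0}^{l-1}\bigl(1+e(2t\,3^{l-1-i}/M)\bigr).
\]
The principal term $t=0$ equals $2^l/M=2^l q'/(3^l-1)\le 3^{n+1}(2/3)^l$. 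Summing this main term over the at most $d(3^l-1)=3^{o(l)}$ admissible divisors $q'$, and then over all $l$, yields a geometric-type series $3^{n+1}\sum_l 3^{o(l)}(2/3)^l=O(3^{n})$, i.e.\ $\gamma=\log_2 3<2$; this already beats the naive separation and level-interval counts, which give only $\gamma=\log_2 6$. The hard part, and the step I expect to be the genuine obstacle, is to show that the non-principal characters $t\neq 0$ do not overwhelm this main term, i.e.\ to establish a uniform bound on $\sum_{t\neq 0}\prod_i\bigl|1+e(2t\,3^{l-1-i}/M)\bigr|$. Because $M\mid 3^l-1$, the exponents $3^{l-1-i}\bmod M$ run through the cyclic group generated by $3$ in $(\Z/M\Z)^*$, so this is a structured product of Mahler/Gauss-sum type for which no cancellation is available in general; controlling it uniformly in $l$ and over the divisors $q'$ of $3^l-1$ is exactly the missing input on the distribution of digits of rationals that the paper flags as the fundamental difficulty, and is presumably where the partial progress of Solomon and Weiss enters.
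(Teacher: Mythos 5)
You were asked to prove Conjecture \ref{weak}, and the first thing to say is that the paper itself does not prove it: it is stated as a conjecture, supported only by the computer calculations in Table 1, and the authors remark that even partial progress toward the related Conjecture \ref{cantorconj} is deferred to a forthcoming paper of Solomon and Weiss. So there is no proof in the paper to compare yours against; the only question is whether your blind attempt settles what the authors treat as an open problem. It does not, and to your credit you say so explicitly. Everything up to the principal term $t=0$ is correct but elementary bookkeeping, and the step you defer --- bounding the non-principal terms --- is not a loose end but the entire content of the conjecture. With the trivial bound $\bigl|1+e(2t\,3^{l-1-i}/M)\bigr|\le 2$, each non-principal term can be as large as $2^l$, and summing over $t$ merely returns the trivial count $2^l$; since $l=\mathrm{ord}_{q'}(3)$ can be as large as $q'-1\approx 3^{n+1}$ (not $O(n)$), this is vacuous. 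Without the character-sum estimate, your argument yields nothing beyond the trivial separation bound $N(3^n,3^{n+1})=O(4^n)$, i.e.\ $\gamma=2$, which is exactly the bound the conjecture asks to beat.

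Moreover, the estimate you hope for cannot hold in the uniform form you state, so the gap is not merely unfilled --- the proposed route needs restructuring. Take $q'=(3^l-1)/2$, so $M=2$: the single non-principal character $t=1$ gives $\prod_{i=0}^{l-1}\bigl(1+e(3^{l-1-i})\bigr)=2^l$, with no cancellation whatsoever --- correctly so, since every $N=\sum_i c_i3^{l-1-i}$ with digits $c_i\in\{0,2\}$ is even, hence all $2^l$ strings pass the divisibility test. The conjecture survives this example only because this $q'$ satisfies $q'\approx 3^l/2$, so it lands in the window $[3^n,3^{n+1})$ only when $l=n+1$, where $2^l$ is affordable. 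Any viable version of your estimate must therefore trade the size of $M$ against decay of the product: one needs quantitative Fourier-type decay of the level-$l$ Cantor distribution at rationals $t/M$, uniformly over the multiplicative orbit of $3$ modulo $M$, when $M\mid 3^l-1$ is large. Nothing of this kind is known --- the Cantor measure has no Fourier decay along powers of $3$, and control of $\times 3$ orbits modulo $M$ is precisely the sort of information about the distribution of rationals in $C$ that the paper identifies as the missing fundamental input. Your reduction is a sensible and correctly executed reformulation (and the main-term heuristic's prediction $\gamma=\log_2 3$ is consistent with the paper's numerics), but it is a reformulation, not a proof. Two minor slips: the pre-period does not contribute a bounded factor, since a denominator $3^aq'$ admits $2^a$ pre-periodic digit strings with $a$ as large as $n+1$ (this is repaired by a geometric summation, using that your target exponent exceeds $1$); and the trivial counting bound is $\gamma=2$, not $\log_2 6$.
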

Note that a confirmation of this conjecture (for which our calculations are
quite strong evidence) would be sufficient to show, by an argument similar to the one above, that the set
\begin{equation}
\label{fact}
\big\{x\in C : \bigl| \frac{p}{q} - x \bigr| < \frac{1}{q^2} \text{ for infinitely many } \frac{p}{q} \in C\big\}
\end{equation}
 is $\mu$-null. In turn, consider the set of $x \in C$ such
that \begin{equation} \label{wa}
\text{For all } \epsilon>0\text{, there exist infinitely many }
\frac{p}{q} \notin C \text{ satisfying } \bigl| \frac{p}{q} - x \bigr| <
\frac{\epsilon}{q^2}. 
\end{equation} 
We denote this extrinsic analogue of {\bf{WA}}, the complement of {\bf{BA}}, by ${\bf{WA}}_{C^c}$
and its complement by ${\bf{BA}}_{C^c}$. Thus (\ref{fact}), combined
with Corollary 1.10 of \cite{EFS} stating that ${\bf{BA}}\cap C$ is
$\mu$-null, show the following:

\begin{cor} [\bf{of conjecture \ref{weak}}] ${\bf{WA}}_{C^c}$ is of $\mu$-full measure.
\end{cor}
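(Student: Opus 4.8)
The plan is to deduce the full-measure statement from the two null-set facts already assembled: that the intrinsically well-approximable set (\ref{fact}), which I will denote $W$, is $\mu$-null (granting Conjecture \ref{weak}), and that ${\bf{BA}}\cap C$ is $\mu$-null by Corollary 1.10 of \cite{EFS}. The idea is to produce an explicit subset of ${\bf{WA}}_{C^c}$ whose complement in $C$ is contained in $W\cup({\bf{BA}}\cap C)$, and then invoke finite subadditivity.

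First I would discard the rationals in $C$, a countable and hence $\mu$-null set (recall $d=\dim C>0$), so that we may work with irrational $x\in C$. For such $x$, lying outside ${\bf{BA}}$ is equivalent to the well-approximable property that for every $\epsilon>0$ there are infinitely many reduced rationals $p/q\in\mathbb{Q}$ with $|x-p/q|<\epsilon/q^2$. By \cite{EFS}, $\mu$-almost every $x\in C$ lies in ${\bf{WA}}\cap C$; and by the Corollary of Conjecture \ref{weak}, $\mu$-almost every $x\in C$ lies outside $W$, meaning only finitely many reduced $p/q\in C$ satisfy $|x-p/q|<1/q^2$.

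Second, I would verify the containment $({\bf{WA}}\cap C)\setminus W\subseteq{\bf{WA}}_{C^c}$. Fix such an $x$ and any $\epsilon\in(0,1]$; the case $\epsilon>1$ reduces to $\epsilon=1$. The well-approximable property yields infinitely many reduced $p/q$ with $|x-p/q|<\epsilon/q^2\le 1/q^2$. Any of these lying in $C$ would be among the finitely many $C$-rationals allowed by $x\notin W$, so all but finitely many of this infinite family satisfy $p/q\notin C$. Hence for every $\epsilon$ there are infinitely many $p/q\notin C$ with $|x-p/q|<\epsilon/q^2$, which is precisely condition (\ref{wa}) defining ${\bf{WA}}_{C^c}$.

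Combining the containment with the two null-set facts gives $C\setminus{\bf{WA}}_{C^c}\subseteq W\cup({\bf{BA}}\cap C)$ up to the null set of rationals, a finite union of $\mu$-null sets; thus ${\bf{WA}}_{C^c}$ is $\mu$-full. The only genuine subtlety, beyond the bookkeeping, is the mismatch between the arbitrary constant $\epsilon$ in the definition of ${\bf{WA}}_{C^c}$ and the fixed constant $1$ in the definition of $W$: restricting to $\epsilon\le 1$ is exactly what lets the finiteness coming from $x\notin W$ bound the number of $C$-approximants at every scale. I therefore anticipate no serious obstacle, the essential inputs being the standard ${\bf{WA}}/{\bf{BA}}$ dichotomy for irrationals and a consistent use of reduced-denominator conventions throughout.
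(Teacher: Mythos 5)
Your proposal is correct and follows exactly the paper's argument: the paper deduces the corollary from the same two null sets, namely the set (\ref{fact}) (null assuming Conjecture \ref{weak}) and ${\bf{BA}}\cap C$ (null by Corollary 1.10 of \cite{EFS}), so that $\mu$-almost every $x\in C$ is well approximable yet has only finitely many approximants of order $1/q^2$ inside $C$, forcing infinitely many approximants outside $C$. You have merely spelled out the containment and the $\epsilon\le 1$ bookkeeping that the paper leaves implicit.
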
 
We note that the sharpness in either direction of this statement is already
known by \cite{KW} and \cite{W}.



\bibliographystyle{alpha}

\noindent \texttt {Ryan Broderick - 
Department of Mathematics,
Northwestern University, 2033 Sheridan Road,
Evanston IL 60208-2370, USA}

\texttt{ryan.broderick@northwestern.edu} \\

\noindent \texttt {Lior Fishman - Department of Mathematics, University of North Texas,
1155 Union Circle 311430,
Denton, TX 76203-5017, USA}

\texttt{lfishman@unt.edu} \\

\noindent \texttt {Asaf Reich - Department of Mathematics, Brandeis University, 415 South Street, Waltham MA 02454-9110, USA} 

\texttt{azmreich@brandeis.edu}

\end{document}